\newtheorem{thm}{Theorem}
\newcommand{\C}{\mathbb C}
\newcommand{\inc}{\int_{\C}}
\newcommand{\fpa}{F^p_\alpha}
\newcommand{\CP}{{\mathcal S}}
\newcommand{\CC}{{\mathcal C}}
\begin{document}

\title[Circle Packing and Fock Spaces]
{Circle packing and interpolation in Fock spaces}

\author{Daniel Stevenson and Kehe Zhu}
\address{Department of Mathematics and Statistics, State University of New York, 
Albany, NY 12222, USA.}
\email{dstevenson@albany.edu, kzhu@math.albany.edu}

\date\today
\keywords{Circle packing, circle covering, Fock spaces, interpolating sequence, 
sampling sequence.}
\subjclass[2010]{30H20 and 52C26.}

\begin{abstract}
It was shown by James Tung in 2005 that if a sequence $Z=\{z_n\}$ of points in the
complex plane satisfies 
$$\inf_{n\not=m}|z_n-z_m|>2/\sqrt\alpha,$$
then $Z$ is a sequence of interpolation for the Fock space $F^p_\alpha$. Using 
results from circle packing, we show that the constant above can be improved to 
$$\sqrt{2\pi/(\sqrt3\,\alpha)},$$
which is strictly smaller than $2/\sqrt\alpha$. A similar result will also be obtained 
for sampling sequences.
\end{abstract}

\maketitle

\section{Introduction}

Let $\C$ denote the complex plane and $dA$ denote area measure on $\C$. For $0<p\le\infty$
and $\alpha>0$ let $\fpa$ denote the space of all entire functions $f$ such that
$$f(z)e^{-\frac\alpha2|z|^2}\in L^p(\C,dA).$$
The spaces $\fpa$ are called Fock spaces.

When $0<p<\infty$, we write
$$\|f\|_{p,\alpha}=\left[\frac{p\alpha}{2\pi}\inc\left|f(z)e^{-\frac\alpha2|z|^2}
\right|^p\,dA(z)\right]^{\frac1p}$$
for $f\in\fpa$. When $p=\infty$, we write
$$\|f\|_{\infty,\alpha}=\sup\{|f(z)|e^{-\frac\alpha2|z|^2}:z\in\C\}$$
for $f\in F^\infty_\alpha$. See \cite{Z} for more information about Fock spaces.

An important concept in the theory of Fock spaces is the notion of interpolating and 
sampling sequences. More specifically, a sequence $Z=\{z_n\}$ of distinct points in the
complex plane is called an interpolating sequence for $\fpa$ if for every sequence
$\{v_n\}$ of complex values satisfying
$$\left\{v_ne^{-\frac\alpha2|z_n|^2}\right\}\in l^p$$
there exists a function $f\in\fpa$ such that $f(z_n)=v_n$ for all $n$. Similarly,
$Z=\{z_n\}$ is called a sampling sequence for $\fpa$ if there exists a positive 
constant $C$ such that
$$C^{-1}\|f\|_{p,\alpha}\le\left\|\left\{f(z_n)e^{-\frac\alpha2|z_n|^2}\right\}
\right\|_{l^p}\le C\|f\|_{p,\alpha}$$
for all $f\in\fpa$.

Interpolating and sampling sequences for Fock spaces are characterized by Seip and Wallsten
in \cite{S,SW}. Their characterizations are based on a special notion of density for
sequences in the complex plane. More specifically, if
$$B(z,r)=\{w\in\C:|z-w|<r\}$$
is the Euclidean disk centered at $z$ with radius $r$, and if $n(Z,B(z,r))$ denotes the
number of points in $Z\cap B(z,r)$, then we define
$$D^-(Z)=\liminf_{r\to\infty}\inf_{\zeta\in\C}\frac{n(Z,B(\zeta,r))}{\pi r^2},$$
and
$$D^+(Z)=\limsup_{r\to\infty}\sup_{\zeta\in\C}\frac{n(Z,B(\zeta,r))}{\pi r^2},$$
and call them the lower and upper densities of $Z$, respectively. 

Note that the sequence $Z=\{z_n\}$ is said to be separated if there exists a positive 
constant $\delta$ such that $|z_n-z_m|\ge\delta$ for all $n\not=m$.

\begin{thm}[Seip-Wallsten]
Suppose $Z$ is a separated sequence and $0<p\le\infty$. Then $Z$ is interpolating for
$\fpa$ if and only if $D^+(Z)<\alpha/\pi$; and $Z$ is sampling for $\fpa$ if and only if
$D^-(Z)>\alpha/\pi$.
\label{1}
\end{thm}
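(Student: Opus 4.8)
The plan is to establish the sampling and interpolation characterizations separately, in each case splitting off an ``easy'' inequality from a ``hard'' one. Throughout I would exploit the translation symmetry of the Fock space: for each $a\in\C$ the weighted shift $W_af(z)=f(z-a)\,e^{\alpha\bar az-\frac\alpha2|a|^2}$ is an isometry of $\fpa$ onto itself, so one may freely translate a configuration to place a convenient point at the origin. I would also record at the outset the elementary upper bounds valid for \emph{every} separated sequence. A standard sub-mean-value estimate with Gaussian weight gives $|f(z_n)|^pe^{-\frac{p\alpha}2|z_n|^2}\le C\int_{B(z_n,\delta/2)}|f(w)|^pe^{-\frac{p\alpha}2|w|^2}\,dA(w)$, and since separation makes the disks $B(z_n,\delta/2)$ pairwise disjoint, summing yields $\|\{f(z_n)e^{-\frac\alpha2|z_n|^2}\}\|_{l^p}\le C\|f\|_{p,\alpha}$. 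This disposes of one half of the sampling inequality and shows that the $l^p$ coefficient condition in the definition of interpolation is automatically forced.

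For the \textbf{sufficiency of the sampling condition}, assume $D^-(Z)>\alpha/\pi$ and suppose the lower sampling bound fails: there are $f_k$ with $\|f_k\|_{p,\alpha}=1$ but $\sum_n|f_k(z_n)|^pe^{-\frac{p\alpha}2|z_n|^2}\to0$. After translating so that a fixed fraction of the mass of $f_k$ sits near the origin and passing to a subsequence, I would extract a normal-family limit $f\neq0$ in $\fpa$ together with a limiting configuration $Z'$ arising as a locally uniform limit of translates of $Z$; the limit function then vanishes on $Z'$. The contradiction comes from a Jensen-type zero count: a nonzero element of $\fpa$ grows at most like $e^{\frac\alpha2|z|^2}$, so $\int_0^r n(t)/t\,dt\le\frac\alpha2 r^2+O(1)$ forces the upper density of its zero set to be at most $\alpha/\pi$, whereas $Z'$ lies in that zero set and satisfies $D^+(Z')\ge D^-(Z')\ge D^-(Z)>\alpha/\pi$. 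The parallel \textbf{necessity of the sampling condition} runs by contraposition: if $D^-(Z)<\alpha/\pi$, the same compactness machinery produces a strictly sub-critical limiting configuration on which a nonzero Fock function with prescribed zeros can be built, and transplanting it back gives unit-norm functions nearly invisible to $Z$, defeating the lower bound.

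The substantive constructive step is the \textbf{sufficiency of the interpolation condition} $D^+(Z)<\alpha/\pi$, which I would prove by H\"ormander's weighted $\bar\partial$-method. Given admissible data $\{v_n\}$, form a smooth quasi-interpolant $F(z)=\sum_nv_n\,\chi_n(z)\,e^{\alpha\bar z_nz-\frac\alpha2|z_n|^2}$ with cutoffs $\chi_n$ supported in small fixed disks about $z_n$; this matches the values but is only smooth. I would then solve $\bar\partial u=\bar\partial F$ against the weight $\varphi(z)=\alpha|z|^2-\sum_n\eta_n\log|z-z_n|^2$, whose logarithmic poles force any finite-norm solution to vanish at each $z_n$, so that $f=F-u$ is entire and interpolates. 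The crux is that $\tfrac14\Delta\varphi=\alpha-\pi\sum_n\eta_n\delta_{z_n}$ must retain positive curvature once the point masses are regularized at a fixed scale, and the bookkeeping of the smoothed negative masses against the fixed background $\alpha$ succeeds, with slack to spare, precisely when $D^+(Z)<\alpha/\pi$; here the circle-packing input of the present paper plays no role, only the Seip-Wallsten threshold. The resulting $L^2$ norm control, combined with the elementary upper bounds of the first paragraph, yields $f\in\fpa$, and the general-$p$ statement follows by the $L^p$ adaptation of these estimates together with the $p$-independence of the density conditions, while \textbf{necessity of interpolation} follows from a weak-limit argument dual to the sampling case. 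The main obstacle throughout is this compactness analysis: one must verify that the density inequalities pass to the limiting configurations and that the extracted limit functions are genuinely nonzero, which is the delicate heart of the Beurling--Landau-type program adapted to the Fock setting.
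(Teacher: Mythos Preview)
The paper does not prove this theorem. Theorem~1 is quoted as a known result of Seip and Wallsten, with references to \cite{S,SW}, and is used only as a black box in the proofs of Theorems~5 and~6. There is therefore no ``paper's own proof'' to compare your proposal against.

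That said, your outline is a faithful high-level sketch of the strategy actually carried out in \cite{S,SW}: the Weyl translation isometries, the sub-mean-value estimate yielding the easy half of sampling, the Beurling-style compactness argument producing a limiting sequence and a limit function, and the Jensen-type growth obstruction are all present in Seip's work. Two remarks are in order. First, the necessity arguments by contraposition must handle the borderline cases $D^-(Z)=\alpha/\pi$ and $D^+(Z)=\alpha/\pi$, not just the strict inequalities you wrote; Seip disposes of these with an additional perturbation step, and your sketch glosses over this. Second, for the sufficiency of interpolation Seip and Wallsten do not use H\"ormander's $\bar\partial$-method but rather an explicit construction based on multiplier functions adapted to the sequence; the $\bar\partial$ route you describe is a legitimate alternative (developed later by Berndtsson, Ortega-Cerd\`a, and others) and does yield the result, but it is not the original argument. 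Your identification of the ``main obstacle''---passing density inequalities to weak limits and ensuring nontriviality of the extracted function---is exactly right, and that is indeed where most of the technical work in \cite{S,SW} resides.
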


Roughly speaking, a sequence of points in the complex plane is interpolating for $\fpa$ 
if it is sparse enough. Similarly, a sequence is sampling for $\fpa$ if it is 
sufficiently dense. Based on Seip and Wallsten's theorem above, James Tung obtained 
the following result in \cite{T,T1}.

\begin{thm}[Tung]
If $Z=\{z_n\}$ is a sequence of points in the complex plane satisfying
$$\inf_{n\not=m}|z_n-z_m|>\frac2{\sqrt\alpha},$$
then $Z$ is an interpolating sequence for $\fpa$.
\label{2}
\end{thm}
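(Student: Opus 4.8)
The plan is to deduce this directly from the Seip--Wallsten criterion (Theorem~\ref{1}), so that all the work reduces to verifying two things about the given sequence $Z$: that it is separated, and that $D^+(Z)<\alpha/\pi$. The first is immediate, since the hypothesis says precisely that $\delta:=\inf_{n\neq m}|z_n-z_m|$ is a positive number; in fact $\delta>2/\sqrt\alpha$, and this extra room will be what we need at the end.

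For the density bound I would run a disjoint-disk packing (area-comparison) argument. Put $\rho=\delta/2$, so that $\rho>1/\sqrt\alpha$. Since any two points of $Z$ are at distance at least $\delta=2\rho$, the open disks $B(z_n,\rho)$ are pairwise disjoint. Now fix $\zeta\in\C$ and $r>0$. Every point $z_n\in Z\cap B(\zeta,r)$ contributes a disk $B(z_n,\rho)$ contained in $B(\zeta,r+\rho)$ (by the triangle inequality), and these disks are disjoint, so comparing areas gives
$$n\bigl(Z,B(\zeta,r)\bigr)\,\pi\rho^2\le\pi(r+\rho)^2,$$
hence
$$\frac{n(Z,B(\zeta,r))}{\pi r^2}\le\frac1{\pi\rho^2}\left(1+\frac\rho r\right)^2.$$
The right-hand side is independent of $\zeta$, so taking the supremum over $\zeta$ and then $\limsup_{r\to\infty}$ removes the factor $(1+\rho/r)^2$ and yields $D^+(Z)\le 1/(\pi\rho^2)=4/(\pi\delta^2)$.

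The point of working with the sharp separation radius $\rho=\delta/2$, rather than the weaker radius $1/\sqrt\alpha$, appears exactly here: from $\delta>2/\sqrt\alpha$ we get $\delta^2>4/\alpha$, hence $4/(\pi\delta^2)<\alpha/\pi$, a \emph{strict} inequality. Therefore $D^+(Z)<\alpha/\pi$, and Theorem~\ref{1} shows that $Z$ is an interpolating sequence for $\fpa$. I do not expect a genuine obstacle; the only subtle point is the strictness of $D^+(Z)<\alpha/\pi$, which is forced precisely by the strict hypothesis $\delta>2/\sqrt\alpha$ and would fail in the borderline case (a square lattice with spacing exactly $2/\sqrt\alpha$ has $D^+=\alpha/\pi$ and need not be interpolating). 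Everything else is the elementary fact that a separation condition controls the counting function through disjoint disks of a fixed radius.
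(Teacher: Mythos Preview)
Your proof is correct. The paper does not actually prove Theorem~\ref{2} itself; it is quoted as Tung's result and then superseded by Theorem~\ref{5}. So the relevant comparison is between your argument and the paper's proof of Theorem~\ref{5}.

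Structurally the two proofs are identical: form the disjoint disks $B(z_n,\sigma/2)$, convert a packing bound into an upper bound on $D^+(Z)$, and invoke Seip--Wallsten. The difference is in which packing bound is used. You use the trivial bound ``disjoint disks have total area at most the area of their container,'' i.e.\ density $\le 1$, which yields $D^+(Z)\le 4/(\pi\delta^2)$ and hence exactly Tung's constant $2/\sqrt\alpha$. The paper instead invokes the Thue--Fejes T\'oth theorem (Theorem~\ref{3}) that any circle packing has density at most $\pi/\sqrt{12}$, which sharpens the conclusion to $D^+(Z)\le 2/(\sqrt3\,\delta^2)$ and produces the smaller constant $\sqrt{2\pi/(\sqrt3\,\alpha)}$. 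Your route is more elementary and self-contained; the paper's route buys the improved constant at the cost of importing a nontrivial result from discrete geometry. Your remark about the borderline square lattice is apt: that example shows your bound $D^+(Z)\le 4/(\pi\delta^2)$ is sharp for the trivial packing estimate, while the paper's sharper bound is saturated by the hexagonal lattice.
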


Tung's result gives us an easily verifiable sufficient condition for a sequence to be
interpolating for $\fpa$. The purpose of this note is to show that Tung's result can be
improved, namely, we will prove that the constant in Tung's theorem above can be improved to
$$\sqrt{\frac{2\pi}{\sqrt3\,\alpha}},$$
which is strictly smaller than $2/\sqrt\alpha$.

We will also obtain a similar result about sampling sequences for $\fpa$. Our approach is
based on some classical results for circle packing in the complex plane.

\section{Circle packing}

A circle packing of the plane is a countable collection of non-overlapping circles in $\C$.
For many years it had remained a curiosity to mathematicians as to what arrangement of
circles of a fixed radius would cover the largest proportion of the plane. 

It was already known to Joseph Louis Lagrange in 1773 that, among lattice arrangements 
of circles, the highest density is achieved by the hexagonal lattice of the bee's honeycomb, 
in which the centers of the circles form a hexagonal lattice, with each circle surrounded 
by 6 others. The density of such a packing is given by
$$\frac\pi{\sqrt{12}}=0.9069\cdots.$$
In 1890, Axel Thue showed that this density was actually maximal among all possible circle 
packings (not necessarily lattice packings). But his proof was considered to be incomplete 
by some mathematicians, and a more rigorous proof was finally found by 
L\'aszl\'o Fejes T\'oth in 1940.

Let $S(z,r)=\partial B(z,r)$ denote the circle centered at $z$ with radius $r$. If
$\CP=\{S(z_n,r_0)\}$ is a circle packing in the plane, its packing density is defined as
$$\Delta(\CP)=\limsup_{r\to\infty}\sup_{\zeta\in\C}\frac1{\pi r^2}
\sum\left\{\pi r_0^2: B(z_n,r_0)\cap B(\zeta,r)\not=\emptyset\right\}.$$
See page 22 of \cite{R}. Therefore, the historical result about circle packing in the plane 
can be stated as follows. See page 1 of \cite{R} for example.

\begin{thm}
For any circle packing $\CP$ we always have 
$$\Delta(\CP)\le\frac\pi{\sqrt{12}}=\frac\pi{2\sqrt3}<1.$$ 
Furthermore, equality is achieved by the hexagonal packing.
\label{3}
\end{thm}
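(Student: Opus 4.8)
The plan is to reduce the packing-density estimate to a local statement about the Dirichlet--Voronoi tessellation attached to the centers, and then to run an averaging argument powered by Euler's formula. Since $\Delta(\CP)$ is invariant under dilation, I would first normalize $r_0=1$ and write $Z=\{z_n\}$ for the set of centers; because the disks $B(z_n,1)$ are pairwise disjoint, $|z_n-z_m|\ge 2$ whenever $n\ne m$. To each center attach its Voronoi cell $V_n=\{w\in\C:|w-z_n|\le|w-z_m|\ \text{for all }m\}$. These are convex, possibly unbounded, polygons that tile $\C$, and the separation $|z_n-z_m|\ge 2$ forces $B(z_n,1)\subset V_n$, since every perpendicular bisector between $z_n$ and another center lies at distance at least $1$ from $z_n$.

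The core of the argument is the classical extremal fact that, among convex $m$-gons containing the closed unit disk, the regular circumscribed $m$-gon has least area, namely $m\tan(\pi/m)$; writing $h(x)=x\tan(\pi/x)$ and letting $m_n\ge 3$ be the number of sides of $V_n$, this gives $|V_n|\ge h(m_n)$ (unbounded cells, for which $|V_n|=\infty$, cause no difficulty). Fixing a large window $B(\zeta,r)$ and letting $N$ be the number of cells $V_n$ meeting it, the union of these cells contains $B(\zeta,r)$ and, modulo a boundary contribution negligible against $r^2$, has area close to $\pi r^2$; within this union the fraction of area covered by the disks is $N\pi/\sum|V_n|\le \pi/(\tfrac1N\sum h(m_n))$. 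I would then invoke two standard ingredients. First, $h$ is convex and decreasing on $[3,\infty)$, a one-variable calculus check. Second, by Euler's formula applied to the planar graph of the tessellation --- every vertex has degree at least $3$, so $2E\ge 3V$ and hence $F\ge E/3$ up to boundary terms --- the mean number of sides $\tfrac1N\sum m_n$ does not exceed $6$ in the limit $r\to\infty$. Jensen's inequality then yields $\tfrac1N\sum h(m_n)\ge h(\tfrac1N\sum m_n)\ge h(6)=6\tan(\pi/6)=2\sqrt3=\sqrt{12}$, so the density over any window, and therefore $\Delta(\CP)$, is at most $\pi/\sqrt{12}$; since $\pi/\sqrt{12}=\pi/(2\sqrt3)<1$, the remaining inequality is immediate.

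For the equality assertion I would simply exhibit the hexagonal packing: place the centers on the lattice generated by $2$ and $2e^{i\pi/3}$. Every Voronoi cell is then a regular hexagon of area $2\sqrt3$ circumscribed about the unit disk, so each inequality above is an equality and $\Delta(\CP)=\pi/\sqrt{12}$.

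The step I expect to be the real obstacle is not any single inequality but making the Euler-formula averaging rigorous for an infinite tessellation: cells may be unbounded or degenerate, the packing may contain arbitrarily large gaps, and one must control the cells straddling the boundary of $B(\zeta,r)$ so that the error is $o(r^2)$ and the estimate $\tfrac1N\sum m_n\le 6$ genuinely survives in the limit. As this is precisely the classical theorem of Thue and Fejes T\'oth, within the present paper the economical course is to quote it from Rogers' monograph rather than reproduce the full argument.
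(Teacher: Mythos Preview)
Your sketch is the classical Fejes T\'oth argument and is essentially sound: Voronoi cells contain the packed disks, each $m$-gon containing the unit disk has area at least $m\tan(\pi/m)$, Euler's formula bounds the average number of sides by $6$, and convexity plus monotonicity of $h(x)=x\tan(\pi/x)$ finishes via Jensen. You have also correctly identified the genuine technical burden, namely controlling boundary cells so that the averaged Euler estimate survives the limit $r\to\infty$.

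That said, the paper does not prove this theorem at all. Theorem~\ref{3} is quoted as a classical result, with a pointer to page~1 of Rogers' monograph, and is then used as a black box in the proof of Theorem~\ref{5}. So there is no ``paper's own proof'' to compare against: your final sentence --- that the economical course is simply to cite Rogers --- is precisely what the authors do. Your outline therefore goes well beyond what the paper requires, supplying an argument where the paper supplies only a reference.
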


There is also a corresponding notion of circle covering. More specifically, we say
that a countable collection of circles $\CC=\{S(z_n,r_0)\}$ is a circle covering of the
plane if the union of $\{B(z_n,r_0)\}$ covers the whole plane $\C$. The number
$$\delta(\CC)=\liminf_{r\to\infty}\inf_{\zeta\in\C}\frac1{\pi r^2}
\sum\left\{\pi r_0^2:B(z_n,r_0)\subset B(\zeta,r)\right\}$$
will be called the covering density of $\CC$. See page 22 of \cite{R}. The following 
theorem is a classical result from circle covering in the plane. See page 16 of \cite{R}
for example.

\begin{thm}
For any circle covering $\CC$ we always have 
$$\delta(\CC)\ge\frac{2\pi}{3\sqrt3}>1.$$
Furthermore, equality is achieved by circles centered at any hexagonal lattice with 
the same radius chosen to be the minimum so that these circles cover the plane.
\label{4}
\end{thm}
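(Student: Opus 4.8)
The plan is to prove Theorem~\ref{4} by the classical argument of Fejes T\'oth, using the Delaunay triangulation of the centers together with its \emph{empty circumcircle} property. Let $\CC=\{S(z_n,r_0)\}$ be a circle covering of $\C$. Form the Delaunay complex of the point set $\{z_n\}$ and, without introducing new vertices, triangulate any non-triangular (cocircular) faces by diagonals; this produces a face-to-face tiling of $\C$ by triangles $\{T_k\}$, each inscribed in a circle containing no center in its interior, and each with all three vertices among the $z_n$.

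The first step is to show that every such triangle $T_k$ has circumradius at most $r_0$. Let $c_k$ be its circumcenter and $\rho_k$ its circumradius. The empty circumcircle property gives $|c_k-z_n|\ge\rho_k$ for every $n$, while the covering hypothesis produces some $n$ with $|c_k-z_n|\le r_0$; hence $\rho_k\le r_0$. The second step is the elementary extremal fact that among all triangles inscribed in a circle of radius $\rho$ the equilateral one has the largest area, namely $\tfrac{3\sqrt3}{4}\rho^2$; combined with $\rho_k\le r_0$ this yields $|T_k|\le\tfrac{3\sqrt3}{4}\,r_0^2$ for every $k$. Note also that the diameter of $T_k$, being its longest side, is at most $2\rho_k\le 2r_0$.

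The third step is a counting argument. Fix $\zeta\in\C$ and a large $r$, and write $N(\zeta,r)$ for the number of centers $z_n$ lying in $B(\zeta,r)$. Since the angles of each triangle sum to $\pi$, while the triangles incident to an interior vertex have angles summing to $2\pi$, the number of triangles that meet $B(\zeta,r)$ is $2N(\zeta,r)+O(r)$ as $r\to\infty$, the error coming only from a boundary layer of width $O(r_0)$. Because these triangles cover $B(\zeta,r)$,
$$\pi r^2\le\sum_k|T_k|\le\frac{3\sqrt3}{4}\,r_0^2\bigl(2N(\zeta,r)+O(r)\bigr),$$
so $N(\zeta,r)\,\pi r_0^2\ge\frac{2}{3\sqrt3}\,\pi r^2+O(r)$. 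Since the sum defining $\delta(\CC)$ equals $\pi r_0^2$ times the number of disks contained in $B(\zeta,r)$, and that count differs from $N(\zeta,r)$ only by a boundary term of order $r$, dividing by $\pi r^2$, taking $\inf_\zeta$, and letting $r\to\infty$ gives $\delta(\CC)\ge\frac{2\pi}{3\sqrt3}$. For sharpness one checks the hexagonal example directly: the Delaunay triangulation of a hexagonal lattice consists of congruent equilateral triangles, and taking $r_0$ to be the least radius for which the corresponding disks cover $\C$ forces $r_0$ to equal the circumradius of those triangles, so every inequality above becomes an equality and $\delta(\CC)=\frac{2\pi}{3\sqrt3}$.

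I expect the main obstacle to be the end of the third step: justifying, with the precise $\liminf$/$\inf$ definition of $\delta(\CC)$, that the boundary contributions in both the triangle count and the disk count are genuinely of lower order, so that no loss is incurred in passing to the limit. Handling degenerate cocircular configurations cleanly — so that triangulating the Delaunay complex adds no vertices and leaves the area bound $|T_k|\le\tfrac{3\sqrt3}{4}r_0^2$ intact — and supplying the short proof that the regular triangle maximizes inscribed area are related but milder technical points.
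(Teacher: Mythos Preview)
The paper does not prove this theorem; it is quoted as a classical result in circle covering, with a reference to page~16 of Rogers, \emph{Packing and Covering}. There is therefore no argument in the paper to compare yours against. Your outline is the standard Fejes T\'oth proof and is essentially correct.

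One comment on the step you yourself flag as the main obstacle. The assertion that the number of Delaunay triangles meeting $B(\zeta,r)$ equals $2N(\zeta,r)+O(r)$ is not literally true for an arbitrary covering: the centers need not be separated, so the boundary annulus can contain arbitrarily many of them and the ``$O(r)$'' is unjustified. What you actually need, and what follows directly from the angle-sum identity, is the one-sided bound. Summing the interior angles of all triangles meeting $B(\zeta,r)$ gives $\pi$ times their number; regrouping by vertex, each center contributes at most $2\pi$, and only centers in $B(\zeta,r+2r_0)$ contribute at all (since every triangle has diameter at most $2r_0$). Hence the triangle count is at most $2N(\zeta,r+2r_0)$. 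Feeding this into your area inequality gives
\[
N(\zeta,r+2r_0)\ \ge\ \frac{2\pi r^2}{3\sqrt3\,r_0^2}
\]
for every $\zeta$ and every $r>0$. The fixed shifts $r\mapsto r+2r_0$ here and $r\mapsto r-r_0$ (passing from ``center in $B(\zeta,r)$'' to ``disk contained in $B(\zeta,r)$'') are then harmless: after dividing by $\pi r^2$ they contribute a factor $(r-3r_0)^2/r^2\to1$, and the desired inequality $\delta(\CC)\ge 2\pi/(3\sqrt3)$ follows, uniformly in $\zeta$, with no separation hypothesis. The local-finiteness you implicitly use to build the Delaunay complex is also harmless: if infinitely many centers lie in some bounded set the covering density is infinite and the inequality is trivial.
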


There is a large body of research work concerning circle packing and circle covering.
See \cite{R} for an elementary introduction to these topics and \cite{St} for a modern
treatment of the subject based on analytic functions.

\section{Interpolation and sampling in Fock spaces}

We now apply the classical results about circle packing and circle covering to obtain
sufficient conditions for interpolating and sampling sequences for Fock spaces.

\begin{thm}
If $Z=\{z_n\}$ is a sequence of points in the complex plane and
$$\inf_{n\not=m}|z_n-z_m|>\sqrt{\frac{2\pi}{\sqrt3\,\alpha}},$$
then $Z$ is an interpolating sequence for $\fpa$.
\label{5}
\end{thm}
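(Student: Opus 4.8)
The plan is to reduce to the Seip--Wallsten criterion (Theorem~\ref{1}) and then control the upper density $D^+(Z)$ by the sharp circle--packing bound (Theorem~\ref{3}). Set $d=\inf_{n\ne m}|z_n-z_m|$. We may assume $Z$ has at least two points, so that $0<d<\infty$; in particular $|z_n-z_m|\ge d$ for all $n\ne m$, so $Z$ is separated and Theorem~\ref{1} applies. Thus it suffices to prove that $D^+(Z)<\alpha/\pi$.

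First I would set $r_0=d/2$ and look at the family $\CP=\{S(z_n,r_0)\}$. If $n\ne m$ and some $w$ lies in $B(z_n,r_0)\cap B(z_m,r_0)$, then $|z_n-z_m|\le|z_n-w|+|w-z_m|<2r_0=d$, which is impossible. Hence the open disks $B(z_n,r_0)$ are pairwise disjoint, $\CP$ is a circle packing of the plane, and Theorem~\ref{3} gives
$$\Delta(\CP)\le\frac{\pi}{\sqrt{12}}=\frac{\pi}{2\sqrt3}.$$

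The only step that requires care is matching the definition of $\Delta(\CP)$ with that of $D^+(Z)$. For fixed $\zeta\in\C$ and $r>0$, the condition $B(z_n,r_0)\cap B(\zeta,r)\ne\emptyset$ is equivalent to $|z_n-\zeta|<r+r_0$, so the inner sum in the definition of $\Delta(\CP)$ equals $\pi r_0^2\,n(Z,B(\zeta,r+r_0))$. Since $r^2\le(r+r_0)^2$, the corresponding averaged quantity $\frac{1}{\pi r^2}\,\pi r_0^2\,n(Z,B(\zeta,r+r_0))$ is at least $\pi r_0^2\cdot\frac{n(Z,B(\zeta,r+r_0))}{\pi(r+r_0)^2}$; taking the supremum over $\zeta$ and then the $\limsup$ as $r\to\infty$ — and reparametrizing $R=r+r_0$ on the right, which is legitimate because $\pi r_0^2$ and the inequality $r^2\le(r+r_0)^2$ do not involve $\zeta$ — we obtain $\Delta(\CP)\ge\pi r_0^2\,D^+(Z)$.

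Combining this with the packing bound,
$$D^+(Z)\le\frac{\Delta(\CP)}{\pi r_0^2}\le\frac{1}{2\sqrt3\,r_0^2}=\frac{2}{\sqrt3\,d^2}.$$
By hypothesis $d^2>2\pi/(\sqrt3\,\alpha)$, so $D^+(Z)<\frac{2}{\sqrt3}\cdot\frac{\sqrt3\,\alpha}{2\pi}=\frac{\alpha}{\pi}$, and Theorem~\ref{1} shows $Z$ is interpolating for $\fpa$. I do not expect a genuine obstacle: Tung's theorem is essentially the special case in which one uses only the trivial area estimate $\Delta(\CP)\le1$, and all that is new here is feeding in the extremal hexagonal value $\pi/\sqrt{12}$ in its place, the bookkeeping between the two densities being the one spot to be careful.
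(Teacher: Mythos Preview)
Your argument is correct and follows essentially the same route as the paper: form the packing $\CP=\{S(z_n,d/2)\}$, invoke Theorem~\ref{3}, compare $\Delta(\CP)$ with $D^+(Z)$ to get $D^+(Z)\le 2/(\sqrt3\,d^2)$, and finish with Theorem~\ref{1}. The only cosmetic difference is that the paper unwinds the $\limsup$ via an $\varepsilon$--argument and the cruder inclusion ``$z_n\in B(\zeta,r)\Rightarrow B(z_n,\sigma/2)\cap B(\zeta,r)\ne\emptyset$'', whereas you use the exact equivalence $B(z_n,r_0)\cap B(\zeta,r)\ne\emptyset\iff z_n\in B(\zeta,r+r_0)$ and reparametrize; both lead to the same bound.
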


\begin{proof}
Suppose $\sigma>0$ and $|z_n-z_m|\ge\sigma$ for all $n\not=m$. It is then clear that
$\CP=\{S(z_n,\sigma/2)\}$ is a circle packing in the complex plane. By Theorem~\ref{3},
$\Delta(\CP)\le\pi/\sqrt{12}$. It follows from the definition of packing density
that for any $\varepsilon>0$ there exists some positive number $R$ such that for all 
$r>R$ and all $\zeta\in\C$ we have
$$\frac1{\pi r^2}\sum\left\{\frac{\pi\sigma^2}4:B(z_n,\sigma/2)\cap B(\zeta,r)
\not=\emptyset\right\}<\frac\pi{\sqrt{12}}+\varepsilon.$$
Since $z_n\in B(\zeta,r)$ implies
$$B(z_n,\sigma/2)\cap B(\zeta,r)\not=\emptyset,$$
we must also have
$$\frac1{\pi r^2}\sum\left\{\frac{\pi\sigma^2}4:z_n\in B(\zeta,r)\right\}<
\frac\pi{\sqrt{12}}+\varepsilon.$$
Rewrite this as
$$\frac{\sigma^2}{4r^2}\,n(Z\cap B(\zeta,r))<\frac\pi{\sqrt{12}}+\varepsilon,$$
or equivalently,
$$\frac{n(Z\cap B(\zeta,r))}{\pi r^2}<\frac4{\sqrt{12}\,\sigma^2}+
\frac{4\varepsilon}{\sigma^2\pi}.$$
Take the supremum over $\zeta\in\C$ and let $r\to\infty$. We obtain
$$D^+(Z)\le\frac2{\sqrt3\,\sigma^2}+\frac{4\varepsilon}{\sigma^2\pi}.$$
Since $\varepsilon$ is arbitrary, we must have $D^+(Z)\le2/(\sqrt3\,\sigma^2)$.

Now if
$$\sigma>\sqrt{\frac{2\pi}{\sqrt3\,\alpha}},$$
then
$$\frac2{\sqrt3\,\sigma^2}<\frac\alpha\pi,$$
so that $D^+(Z)<\alpha/\pi$. Combining this with Theorem~\ref{1}, we conclude that the 
condition
$$\inf_{n\not=m}|z_n-z_m|>\sqrt{\frac{2\pi}{\sqrt3\,\alpha}}$$
implies that $Z=\{z_n\}$ is a sequence of interpolation for $\fpa$.
\end{proof}

Suppose $Z=\{z_n\}$ is a hexagonal lattice and $\sigma$ is the distance from any point in
$Z$ to its nearest neighbor. When $r$ is very large, the difference between the number of 
points $z_n$ satisfying $B(z_n,\sigma/2)\cap B(\zeta,r)\not=\emptyset$ and the number of
points $z_n$ satisfying $z_n\in B(\zeta,r)$ is insignificant. Since the hexagonal circle
packing has the largest packing density, a careful examination of the proof above shows 
that the constant $\sqrt{(2\pi)/(\sqrt3\,\alpha)}$ in Theorem~\ref{5} is best possible.

A companion result for sampling sequences is the following.

\begin{thm}
Let $Z=\{z_n\}$ be a sequence of distinct points in the complex plane. If there exists
a positive number 
$$\sigma<\sqrt{\frac{2\pi}{3\sqrt3\,\alpha}}$$ 
such that $\CC=\{S(z_n,\sigma)\}$ is a circle covering for $\C$, then $Z$ is a 
sampling sequence for $\fpa$.
\label{6}
\end{thm}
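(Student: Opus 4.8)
The plan is to mirror the proof of Theorem~\ref{5}, but now exploiting the covering theorem (Theorem~\ref{4}) to produce a \emph{lower} bound on the lower density $D^-(Z)$, and to control separation from above. First I would observe that if $\CC=\{S(z_n,\sigma)\}$ is a circle covering, then for any $\zeta\in\C$ and any $r>\sigma$, the disks $B(z_n,\sigma)$ that lie inside $B(\zeta,r)$ already cover the concentric disk $B(\zeta,r-\sigma)$, so comparing areas gives
$$\pi(r-\sigma)^2\le\sum\left\{\pi\sigma^2:B(z_n,\sigma)\subset B(\zeta,r)\right\}.$$
On the other hand, every center $z_n$ with $B(z_n,\sigma)\subset B(\zeta,r)$ satisfies $z_n\in B(\zeta,r)$, so the right-hand sum is at most $\pi\sigma^2\,n(Z,B(\zeta,r))$. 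Hence
$$\frac{n(Z,B(\zeta,r))}{\pi r^2}\ge\frac{(r-\sigma)^2}{\sigma^2 r^2},$$
and letting $r\to\infty$ after taking the infimum over $\zeta$ yields $D^-(Z)\ge1/\sigma^2$.

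Actually, to keep the statement aligned with its analogue and to make the constant sharp, I would instead invoke Theorem~\ref{4} directly: since $\delta(\CC)\ge 2\pi/(3\sqrt3)$, the definition of covering density gives, for every $\varepsilon>0$, some $R$ so that for all $r>R$ and all $\zeta$,
$$\frac1{\pi r^2}\sum\left\{\pi\sigma^2:B(z_n,\sigma)\subset B(\zeta,r)\right\}>\frac{2\pi}{3\sqrt3}-\varepsilon.$$
Bounding the left side above by $\sigma^2 n(Z,B(\zeta,r))/r^2$ exactly as before and rearranging gives $n(Z,B(\zeta,r))/(\pi r^2)>\big(2\pi/(3\sqrt3)-\varepsilon\big)/(\pi\sigma^2)$, whence $D^-(Z)\ge 2/(3\sqrt3\,\sigma^2)$ since $\varepsilon$ is arbitrary.

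Next I would check separation: the hypothesis that the circles $S(z_n,\sigma)$ are a \emph{covering} does not by itself forbid points from clustering, so I would point out that $\sigma<\sqrt{2\pi/(3\sqrt3\,\alpha)}$ together with the inequality $D^-(Z)\ge 2/(3\sqrt3\,\sigma^2)>\alpha/\pi$ already forces $D^-(Z)$ to be positive and finite-density in the right range; but Theorem~\ref{1} requires $Z$ to be separated. Here I would note that a sampling sequence need only contain a separated subsequence carrying the sampling property, and any sequence with $D^-(Z)>\alpha/\pi$ has a separated subsequence $Z'$ with $D^-(Z')>\alpha/\pi$ still — in fact one can extract a $\delta$-separated subsequence for $\delta$ small enough that the density drop is negligible, or simply appeal to the finite-union / separated-subsequence reduction standard in the Seip--Wallsten theory. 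Then $Z'$ is sampling by Theorem~\ref{1}, and hence so is $Z$.

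The final step is the arithmetic: if $\sigma<\sqrt{2\pi/(3\sqrt3\,\alpha)}$ then $\sigma^2<2\pi/(3\sqrt3\,\alpha)$, so $2/(3\sqrt3\,\sigma^2)>\alpha/\pi$, giving $D^-(Z)>\alpha/\pi$, and Theorem~\ref{1} completes the proof. I expect the main obstacle to be the separation issue: Theorem~\ref{1} as quoted assumes $Z$ is separated, yet a covering sequence need not be, so the write-up must either justify passing to a separated subsequence without losing the lower-density estimate (the natural route, since removing points only while keeping $D^-$ above $\alpha/\pi$ is possible precisely because the bound $2/(3\sqrt3\,\sigma^2)$ is strictly bigger than $\alpha/\pi$), or cite the appropriate refinement of the Seip--Wallsten sampling characterization that handles non-separated sequences. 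Everything else is a routine area-comparison argument parallel to the proof of Theorem~\ref{5}.
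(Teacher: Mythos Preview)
Your main argument---invoking Theorem~\ref{4} to get $\delta(\CC)\ge 2\pi/(3\sqrt3)$, unwinding the definition of covering density, using $B(z_n,\sigma)\subset B(\zeta,r)\Rightarrow z_n\in B(\zeta,r)$ to pass to $n(Z,B(\zeta,r))$, and concluding $D^-(Z)\ge 2/(3\sqrt3\,\sigma^2)$---is exactly the paper's proof, line for line, followed by the same arithmetic and the same appeal to Theorem~\ref{1}.

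Two side remarks. First, your preliminary ``elementary'' estimate has small slips: the disks $B(z_n,\sigma)$ contained in $B(\zeta,r)$ cover $B(\zeta,r-2\sigma)$, not $B(\zeta,r-\sigma)$ (you need $|z_n-\zeta|<r-\sigma$, not just $<r$), and a factor of $\pi$ is dropped in the displayed inequality, so the conclusion should be $D^-(Z)\ge 1/(\pi\sigma^2)$, which is indeed weaker than the Theorem~\ref{4} bound. None of this matters since you abandon that route.

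Second, on separation: you are right that Theorem~\ref{1} is stated for separated sequences, and a covering hypothesis alone does not force separation. The paper's proof simply ignores this point and applies Theorem~\ref{1} directly once $D^-(Z)>\alpha/\pi$ is established. Your instinct to flag it is sound, but your proposed repair is not quite complete: from ``a separated subsequence $Z'$ is sampling'' you cannot conclude ``$Z$ is sampling,'' because the \emph{upper} sampling inequality need not pass to a supersequence (adding points can blow up the $l^p$ norm of the samples). If one wants a fully rigorous statement, one must either assume $Z$ is separated (or a finite union of separated sequences), or reformulate the conclusion for a suitable separated subsequence; the paper does neither and leaves the issue implicit.
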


\begin{proof}
Suppose that $\CC=\{S(z_n,\sigma)\}$ is a circle covering in the complex plane. By
Theorem~\ref{4}, we have $\delta(\CC)\ge(2\pi)/(3\sqrt3)$. It follows from the definition
of covering density that for any $\varepsilon>0$ there exists a positive number $R$ 
such that for all $r>R$ and all $\zeta\in\C$ we have
$$\frac1{\pi r^2}\sum\left\{\pi\sigma^2:B(z_n,\sigma)\subset B(\zeta,r)\right\}
\ge\frac{2\pi}{3\sqrt3}-\varepsilon.$$
Since $B(z_n,\sigma)\subset B(\zeta,r)$ implies that $z_n\in B(\zeta,r)$, we must also have
$$\frac1{\pi r^2}\sum\left\{\pi\sigma^2:z_n\in B(\zeta,r)\right\}\ge
\frac{2\pi}{3\sqrt3}-\varepsilon.$$
Rewrite this as
$$\frac{\pi\sigma^2}{\pi r^2}\,n(Z\cap B(\zeta,r))\ge\frac{2\pi}{3\sqrt3}-\varepsilon,$$
or equivalently,
$$\frac{n(Z\cap B(\zeta,r))}{\pi r^2}\ge\frac2{3\sqrt3\,\sigma^2}
-\frac\varepsilon{\pi\sigma^2}.$$
Take the infimum over $\zeta$ and let $r\to\infty$. We obtain
$$D^-(Z)\ge\frac2{3\sqrt3\,\sigma^2}-\frac\varepsilon{\pi\sigma^2}.$$
Since $\varepsilon$ is arbitrary, we must have
$$D^-(Z)\ge\frac2{3\sqrt3\,\sigma^2}.$$
It is then easy to see that the condition
$$\sigma<\sqrt{\frac{2\pi}{3\sqrt3\,\alpha}}$$
implies $D^-(Z)>\alpha/\pi$. This along with Theorem~\ref{1} shows that $Z=\{z_n\}$ is
a sampling sequence for $\fpa$.
\end{proof}

Again, if $\CC=\{S(z_n,\sigma)\}$ is an optimal hexagonal circle covering of the complex
plane, then for very large $r$, the difference between the number of points $z_n$ 
satisfying $B(z_n,\sigma)\subset B(\zeta,r)$ and the number of points $z_n$ satisfying
$z_n\in B(\zeta,r)$ is negligible. Therefore, the constant $\sqrt{(2\pi)/(3\sqrt3\,\alpha)}$
in Theorem~\ref{6} is best possible.

\end{document}